\documentclass{article}
\usepackage[T1]{fontenc}
\usepackage{a4wide}
\usepackage{authblk}
\usepackage{graphicx} 
\usepackage{systeme}
\usepackage{amsmath}
\usepackage{amssymb}
\usepackage{hyperref}
\usepackage{amsthm}
\usepackage{xcolor}
\newtheorem{thm}{Theorem}
\newtheorem{assum}[thm]{Assumption}
\newtheorem{prop}[thm]{Proposition}
\newtheorem{defn}[thm]{Definition}

\newtheorem{rem}[thm]{Remark}

\newtheorem{lem}[thm]{Lemma}

\DeclareMathAlphabet{\mathbbb}{U}{bbold}{m}{n}
\renewcommand{\leq}{\leqslant}
\renewcommand{\le}{\leqslant}
\renewcommand{\geq}{\geqslant}
\renewcommand{\ge}{\geqslant}

\renewcommand{\Im}{\mathrm{Im}}
\renewcommand{\Re}{\mathrm{Re}}

\newcommand{\fonction}[5]{
\begin{align*}
\displaystyle
\begin{array}{lrcl}
#1: & #2 & \longrightarrow & #3 \\
    & #4 & \longmapsto & #5
\end{array}
\end{align*}}

\newcommand{\R}{\mathbb{R}}
\newcommand{\N}{\mathbb{N}}
\newcommand{\C}{\mathbb{C}}

\title{A Spectral Exponential Stability Criterion for Integral Difference Equations and Delay Differential Equations in various state spaces\thanks{This project received funding from the Agence Nationale de la Recherche via grant PANOPLY ANR-23-CE48-0001-01.}}
\author[1]{Adam Braun}
\author[1]{Jean Auriol}
\author[1]{Lucas Brivadis}
\affil[1]{Université Paris-Saclay, CNRS, CentraleSupélec, Laboratoire des Signaux et Systèmes, 91190, Gif-sur-Yvette, France. Emails:
        {\tt\small adam.braun@centralesupelec.fr; jean.auriol@centralesupelec.fr; lucas.brivadis@centralesupelec.fr}}%

\date{}
\begin{document}

\maketitle

\begin{abstract}
It is well-known that the exponential stability of Integral Difference Equations and Delay Difference Equations, in the usual state space of continuous functions, is equivalent to the location of the roots of its associated characteristic equation strictly in the open left half-plane 
(see e.g. \cite[Chapter 9]{halebook}). In this paper, we
use results from \cite[Chapter~4]{GripenbergLondenStaffans1990} to
show that this characterization still holds for other functional state spaces: Lebesgue spaces, the space of Borel measurable bounded functions, and the space of functions with bounded variation.
\end{abstract}

\section{Introduction}

Integral Difference Equations (IDE) and Delay Difference Equations (DDE) constitute a class of functional equations incorporating both pointwise and distributed delays in the state. They are widely used in the modeling of engineering and biological systems involving transport and measurement delays~\cite{Niculescu2001DelayEffects}, among many other applications like sampled-data systems~\cite{Fridman2014TimeDelay}, or epidemic models~\cite{COOKE197687}.
Often regarded as a subclass of neutral time-delay systems~\cite{halebook}, IDEs have received little attention in the literature comparatively to DDEs (well studied in~\cite{halebook} or~\cite{Niculescu2001DelayEffects} for example). In~\cite{Chitour_2023}, controllability criteria were established for IDEs with pointwise delays, while~\cite{fueyo2025lqapproximatecontrollabilityfrequency} extended these results to systems involving both pointwise and distributed delays. The Hale–Silkowski criterion~\cite{halebook}, originally formulated for IDEs with pointwise delays, was later generalized in~\cite{felipe} to encompass more general IDEs including distributed terms. Lyapunov-based stability analyses were proposed in~\cite{MONDIE2025100985} for distributed delays only, and necessary Lyapunov conditions for stability were further derived in~\cite{DAMAK20143299,MELCHORAGUILAR2023105536}. More recently,~\cite{LAMARQUE2025112437} established necessary and sufficient conditions for input-to-state stability (ISS) in inhomogeneous IDEs combining both pointwise and distributed delays.
In recent years, this class of functional equations has attracted renewed interest due to its applications in the analysis of partial differential equations (PDEs). In particular, interconnected one-dimensional linear hyperbolic balance laws~\cite{bastin_coron} and IDEs share equivalent stability properties~\cite{auriol_hdr}. This equivalence enables the design of control laws for PDEs within the IDE framework like in~\cite{braun2025stabilizationchainhyperbolicpdes} or~\cite{Braun_2inputs}.

A fundamental exponential stability criterion for delay difference equations (DDEs) was first established in~\cite{halebook}, relating the exponential stability of a DDE to the location of the roots of its associated characteristic equation. In \cite[Chapter 9]{halebook}, this criterion was adapted to IDEs, viewed as a particular class of neutral DDEs. This adaptation has since proven effective in the practical design of stabilizing control laws, as illustrated in~\cite{auriol2024stabilizationintegraldelayequations}.
To the best of our knowledge, this criterion has been established only in the state space of continuous functions, as shown in~\cite[Chapter~9]{halebook}, or in the absence of distributed delays, as in~\cite{hale_feedback_2}. 
Although the methodologies presented in~\cite{Lunel_Kaashoek_characteristic} and~\cite{HENRY1974106} could likely be adapted to prove the criterion in more general state spaces, such a rigorous extension has not yet been carried out. 
Establishing the criterion beyond the space of continuous functions is essential, since Lebesgue spaces naturally arise, for example, in the equivalence between PDE and IDE stabilizability frameworks~\cite{Jean_Adam_Ouidir_Mathieu}.

The purpose of this note is to extend this exponential stability criterion to IDEs and DDEs  where the state space is a Lebesgue space, the space of functions with bounded variation , or the space of Borel measurable bounded functions (this space is considered only for IDEs). Our proof relies on several results from~\cite[Chapter~4]{GripenbergLondenStaffans1990}, after verifying that the assumptions required for their application are satisfied. 
The paper is organized as follows. Section~\ref{section:prelim} presents preliminary results, including the well-posedness of the general IDE under consideration. Section~\ref{main result} is devoted to the proof of the exponential stability criterion for a general IDE. In Section~\ref{critere_DDE} we use the same methodology to prove an exponential stability criterion for a general DDE.
\section{Mathematical setting and notation}

The notation used in this manuscript are borrowed from
%
\cite{GripenbergLondenStaffans1990}.
Let $n>0$ and $p\geq 1$ be integers. The set of positive integers is denoted $\N^*$.
 The $n\times n$ identity matrix is denoted by $I$.
The euclidean norm on $\mathbb{R}^n$ is denoted by $|\cdot|$.
The indicator function of the set  $C\subset \R$ is
denoted by $\mathbbb{1}_{C}$.
In the whole article, $\eta$ is a weight function (see~\cite[Definition 2.1, Chapter 4]{GripenbergLondenStaffans1990}, typically $\eta(s)=e^{\nu s}$ for some $\nu\geq0$ in this article).
 For $C\subset \R$ with nonempty interior, $V(C, \R^n)$ is a Banach space from the list:
    \begin{enumerate}
        \item $L^p(C, \R^n)$ for some $p\in [1,\infty]$, the usual Lebesgue space;
        \item $B^\infty(C, \R^n)$ the borel measurable bounded functions with the sup norm;
        \item $BV(C, \R^n)$ the functions of bounded variations; the norm is the sum of the total variation and the sup-norm.
    \end{enumerate}
We denote $\|\cdot\|_V$ the associated norm.
For $X \in V(C, \R^n)$,  for all $t\in C$, $X_t$ denotes the partial trajectory associated to $X$ and is defined for all $s\in C$ such that $t+s \in C$, by $X_t(s):= X(t+s)$.
 Furthermore, $V_{loc}(C, \R^n)$ denotes the functions that are in $V(K, \R^n)$ for every compact $K\subset C$.
   For $C\subset \R$ with nonempty interior, $V(C, \eta, \R^n):=\{f: C \to \R^n,~ \eta f\in V(C, \R^n)\}$ is a weighted space with norm $\|f\|_{V_\eta}:= \|\eta f\|_V$.
   
   For $C\subset \mathbb{R}$ with nonempty interior, $C_b(C,\C^{n\times n})$ is the space of bounded continuous matrix valued application from $K$ to $\C^{n\times n}$ with the sup norm $\|\cdot\|_{\infty}$.
   
  We denote by $M(C, \mathbb{R}^{n\times n})$, the space of finite measure on $C\subset \mathbb{R}$ (with nonempty interior) equipped with the bounded total variation norm  $\|l\|_{M(K)} = \int_K |l(ds)|<\infty$.
     We denote by $M_{\textit{loc}}(\mathbb{R}_+, \mathbb{R}^{n\times n})$, the space of measures $\mu \in M(K,\mathbb{R}^{n\times n})$ for every compact subset $K$ of $\mathbb{R^+}$; and by $M(\mathbb{R}_+, \eta, \mathbb{R}^{n\times n})$  the set of measures $l$ such that $l\eta\in M(\mathbb{R}_+, \R^{n\times n})$ 
     (see~\cite[Section 4.3]{GripenbergLondenStaffans1990}) with the norm $\|l\|_{M(\mathbb{R}_+,\eta)} = \int_0^{\infty}|l(ds)|\eta(s)ds$. 
  For $a\in \mathbb{R}$, $\delta_{a} \in M(\mathbb{R}, \mathbb{R}^{n\times n})$ is the Dirac distribution at $a$.
    
 For a matrix $M\in \mathbb{R}^{n\times n}$, $|M|$ is a matrix norm.
    For $\mu \in M_{loc}(\mathbb{R}_+, \mathbb{R}^{n\times n})$, $\hat{\mu}$ denotes its Laplace transform (see~\cite[Definition 2.2, Chapter 3]{GripenbergLondenStaffans1990}),
    $$\hat{\mu}(z) = \int_0^\infty \mu(ds)e^{-zs}.$$It is always well defined on $\C$ if $\mu$ has compact support.
    For $\mu, m \in M(\R, \R^{n\times n})$, their convolution is denoted by $\mu*m$. It is defined as the completion of the measure that to each Borel set $E\subset \R$ assigns the value
    $$(\mu*m)(E) = \int_\R \mu(ds)m(E-s),$$
    where $E-s:= \{t-s:t\in E\}$, (see e.g.~\cite[Definition 1.1, Section 4.1]{GripenbergLondenStaffans1990}).
    For $C\subset \mathbb{R}$ with nonempty interior, and $p\in [1,\infty]$, $L^p(C, \mathbb{R}^{n\times n}) := 
\{ M : C \to \mathbb{R}^{n\times n} \ \big|\ 
\|M\|_{L^p}^p := \int_C |M(t)|^p \, dt < \infty \}$ and
 $L^p(C, \eta, \mathbb{R}^{n\times n}) := 
\{ M : C \to \mathbb{R}^{n\times n} \ \big|\ 
\|M\|_{L^p_\eta}^p := \int_C |M(t)|^p \eta^p \, dt < \infty \}$.
For a closed set $Z \subset \C$, $\operatorname{dist}(\cdot,Z)$ denotes the distance to $Z$.

\section{IDE of interest and preliminary results}
\label{section:prelim}
In this section, we introduce the general IDE model that serves as the foundation for our analysis and examine the conditions ensuring its well-posedness. We conclude the section by recalling the notion of exponential stability associated with this class of equations. Let us consider the following IDE, for any $X_0$ in $V([-\tau_*, 0], \R^n)$, 
\begin{equation}\label{main IDE}
\begin{cases}
 X(t) + \sum_{k=1}^{\infty} A_k X(t-\tau_k) + 
 \int_0^{\tau_*} N(s) X(t-s)\,ds = 0,
 \quad \textit{$t>0$},\\[4pt]
 X(t) = X_0(t), \quad \textit{$-\tau_* \le t \le 0$}.
\end{cases}
\end{equation}
with $\tau_* >0$.
A function $X$ is a solution of~\eqref{main IDE} if, for all $t \ge 0$, we have $X_t \in V([-\tau_*,0],\mathbb{R}^n)$, and the following conditions hold:  
when $V = B^\infty$, equation~\eqref{main IDE} must hold for all $t \ge -\tau_*$;  
when $V = BV$ or $V = L^p$, equation~\eqref{main IDE} is required to hold for almost every $t$.

The following assumption is required to show the well-posedness 
of~\eqref{main IDE}. Its stands for the whole article.
\begin{assum} ~
    \begin{itemize}
        \item The sequence of $n\times n$ matrices $(A_k)_{k\in \mathbb{N}^*}$ is such that  $\sum_{k=1}^\infty  |A_k| <\infty$.
        \item The sequence $(\tau_k)_{k\in \mathbb{N}^*}$ is strictly increasing and for all $k\in \mathbb{N}^*$, $\tau_k \in (0, \tau_*]$.
        \item The matrix-valued kernel $N$ belongs to $L^1((0,\tau_*) , \mathbb{R}^{n\times n})$.
    \end{itemize}
\label{assump:main_ide_well_posedness}\end{assum}
We begin with the following proposition, which reformulates equation~\eqref{main IDE} within the framework developed in~\cite[Chapter 4]{GripenbergLondenStaffans1990}. This reformulation enables a direct application of the well-posedness theory established therein.
\begin{prop} \label{one to one}
For all initial data $X_0 \in V([-\tau_*,0],\mathbb{R}^n)$, if $X$ is a solution of~\eqref{main IDE}, then the function $x(t) := X(t)\,\mathbf{1}_{t\geq 0}(t)$ is a solution of
\begin{equation}
    \label{IDE measure form}
    x(t) + \int_0^t \mu(ds)\, x(t-s) = f(t), \quad t \ge 0,
\end{equation}
where $\mu(ds) = \sum_{k=1}^\infty A_k\, \delta_{\tau_k}(ds) + N(s)\,ds \in M(\mathbb{R}_+, \mathbb{R}^{n\times n})$ is compactly supported in $[0,\tau_*]$, and
\begin{equation}
    \label{def:f_IDE_measure}
    f(t) = - \mathbf{1}_{t\leq \tau_*}(t) \int_t^{\tau_*} \mu(ds)\, X_0(t-s).
\end{equation}
Conversely, if $x$ is a solution to~\eqref{IDE measure form}, then $X(t) = x(t)\,\mathbf{1}_{t> 0}(t) + X_0(t)\,\mathbf{1}_{-\tau_* \le t \le 0}(t)$ 
is a solution to~\eqref{main IDE}. Moreover, equation~\eqref{IDE measure form} admits a unique solution 
\(x \in V_{\mathrm{loc}}(\mathbb{R}_+, \mathbb{R}^n)\), given by
\begin{equation}
    \label{sol explicit}
    x(t) = f(t) - \int_0^t \rho(ds)\, f(t-s), \quad  t \ge 0,
\end{equation}
where $\rho \in M_{\mathrm{loc}}(\mathbb{R}_+, \mathbb{R}^{n\times n})$ is the resolvent of $\mu$, i.e.,
the unique measure satisfying
\[
\rho + \mu * \rho \;=\; \rho + \rho * \mu \;=\; \mu.
\]

\end{prop}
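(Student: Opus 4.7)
The plan is to split Proposition~\ref{one to one} into (a) a straightforward algebraic reformulation that sets up the correspondence between~\eqref{main IDE} and~\eqref{IDE measure form}, (b) a regularity check that the forcing term $f$ belongs to $V_{\mathrm{loc}}(\mathbb{R}_+,\mathbb{R}^n)$, and (c) an invocation of the general linear Volterra resolvent theory in~\cite[Chapter~4]{GripenbergLondenStaffans1990}.

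\medskip

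\textbf{Step 1: Reformulation as a Volterra equation.} Given a solution $X$ of~\eqref{main IDE}, set $x(t)=X(t)\mathbf{1}_{t\ge 0}(t)$ and, for fixed $t>0$, split every delayed term according to whether the argument is nonnegative (where it is $x$) or negative (where it is $X_0$). The pointwise sum splits into $\sum_{k:\tau_k\le t}A_kx(t-\tau_k)+\sum_{k:\tau_k>t}A_kX_0(t-\tau_k)$ and the integral splits at $s=\min(t,\tau_*)$. Collecting the $x$ terms, one recognises
\[
\sum_{k:\tau_k\le t}A_kx(t-\tau_k)+\int_0^{\min(t,\tau_*)}N(s)x(t-s)\,ds=\int_0^t \mu(ds)\,x(t-s),
\]
since $\mu$ has support in $[0,\tau_*]$; the remaining terms (nontrivial only when $t\le\tau_*$) regroup into $-f(t)$ as defined in~\eqref{def:f_IDE_measure}. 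The converse direction is symmetric: starting from $x$ solving~\eqref{IDE measure form} and gluing $X_0$ on $[-\tau_*,0]$, the same splitting identity shows that $X$ satisfies~\eqref{main IDE} in the functional sense appropriate to each space $V$ (almost everywhere for $L^p$ and $BV$, everywhere for $B^\infty$).

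\medskip

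\textbf{Step 2: Regularity of $f$.} To apply the resolvent theory in the space $V$, I would check that $f\in V_{\mathrm{loc}}(\mathbb{R}_+,\mathbb{R}^n)$. Since $f$ is supported in $[0,\tau_*]$, the question is purely local. Extending $X_0$ by $0$ off $[-\tau_*,0]$, one can rewrite $f$ as (minus) the convolution of $\mu$ with this extension, restricted to $[0,\tau_*]$. The membership then follows from the standard convolution bounds recalled in~\cite[Chapter~4]{GripenbergLondenStaffans1990}: $M*L^p\subset L^p$, $M*B^\infty\subset B^\infty$, and $M*BV\subset BV$, each with a continuous norm estimate controlled by $\|\mu\|_{M([0,\tau_*])}\le\sum_k|A_k|+\|N\|_{L^1}<\infty$ thanks to Assumption~\ref{assump:main_ide_well_posedness}.

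\medskip

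\textbf{Step 3: Existence, uniqueness, and the variation-of-constants formula.} With $\mu\in M(\mathbb{R}_+,\mathbb{R}^{n\times n})$ compactly supported and $f\in V_{\mathrm{loc}}(\mathbb{R}_+,\mathbb{R}^n)$, the hypotheses of the Volterra resolvent theorem in~\cite[Chapter~4]{GripenbergLondenStaffans1990} are met: compact support of $\mu$ makes it lie in $M(\mathbb{R}_+,\eta,\mathbb{R}^{n\times n})$ for any weight $\eta$, so the resolvent equation $\rho+\mu*\rho=\rho+\rho*\mu=\mu$ admits a unique solution $\rho\in M_{\mathrm{loc}}(\mathbb{R}_+,\mathbb{R}^{n\times n})$. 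The unique solution of~\eqref{IDE measure form} in $V_{\mathrm{loc}}(\mathbb{R}_+,\mathbb{R}^n)$ is then given by~\eqref{sol explicit}, which is obtained formally by convolving~\eqref{IDE measure form} with $\rho$ and using associativity of convolution together with the defining identity of $\rho$.

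\medskip

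The step I expect to be the main technical nuisance is Step 2 in the $BV$ case, because one must track both the total variation and the sup-norm of a measure-function convolution, and because the Dirac part of $\mu$ introduces jumps in $f$ located at the delay points $\tau_k$ (their variations have to be summed against $\sum_k|A_k|$, which is finite by assumption). The rest is largely bookkeeping once the reformulation in Step 1 has been done carefully.
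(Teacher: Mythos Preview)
Your approach is essentially identical to the paper's: the same splitting of the delayed terms to obtain the Volterra form, the same recognition of $f$ as the restriction of a convolution $\mu * X_0$ (the paper cites \cite[Theorem~3.5, Section~4.3]{GripenbergLondenStaffans1990} for exactly the $M*V\subset V$ bounds you invoke), and the same appeal to the resolvent theory of \cite[Chapter~4]{GripenbergLondenStaffans1990} (specifically Theorem~1.7 there) for existence, uniqueness and~\eqref{sol explicit}. The only hypothesis you do not explicitly check in Step~3 is $\det\big(I+\mu(\{0\})\big)\neq 0$, which the cited theorem requires; this is immediate from Assumption~\ref{assump:main_ide_well_posedness} since $\tau_k>0$ forces $\mu(\{0\})=0$, and your anticipated $BV$ difficulty in Step~2 is in fact already absorbed by the uniform convolution estimate, so no case distinction is needed.
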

\begin{proof}
   We show the one-to-one correspondence between the solutions of~\eqref{main IDE} and~\eqref{IDE measure form}.
    Let $x$ be a solution of~\eqref{IDE measure form} and $X_0\in V([-\tau_*, 0], \R^n)$. Define $$\Tilde{x}(t) = \begin{cases}
        x(t) &\textit{if}~  t>0\\
        X_0(t)&\textit{if}~  -\tau_*\leq t\leq0.
    \end{cases}$$
    Then, evaluating equation~\eqref{IDE measure form} for $t\geq \tau_*$, we obtain
    $$x(t) + \int_0^{\tau_*} \mu(ds)x(t-s) =0,$$
    i.e
    $$ x(t) + \sum_{k=1}^\infty  A_kx(t-\tau_k) + \int_0^{\tau_*}N(s)x(t-s)ds=0.$$
    Evaluating equation~\eqref{IDE measure form} for $0\leq t \leq \tau_*$, we obtain
    $$x(t)+ \int_0^{t} \mu(ds)x(t-s) + \int_t^{\tau_*}\mu(ds)x_0(t-s)=0, $$
    i.e.
    \begin{align*}x(t) &+ \sum_{k=1}^\infty  A_kx(t-\tau_k)\mathbbb{1}_{t\geq \tau_k} + \sum_{k=1}^\infty  A_k X_0(t-\tau_k)\mathbbb{1}_{t\leq \tau_k}   \\
    &+ \int_0^{t}N(s)x(t-s)ds + \int_{t}^{\tau_*}N(s)X_0(t-s)ds=0.\end{align*}
    Hence, for all $t >0$,
    $$ \Tilde{x}(t) + \sum_{k=1}^\infty  A_k\Tilde{x}(t-\tau_k) + \int_0^{\tau_*}N(s)\Tilde{x}(t-s)ds=0.$$
    Conversely, if \(X\) solves~\eqref{main IDE}, then \(x(t) = X(t)\mathbbb{1}_{t \ge 0}(t)\) satisfies~\eqref{IDE measure form}. The existence and uniqueness of the solution $x\in V_{\textit{loc}}(\mathbb{R}_+, \mathbb{R}^n)$ of~\eqref{IDE measure form}, and its explicit formulation directly follow from~\cite[Theorem 1.7, Chapter 4]{GripenbergLondenStaffans1990} observing that $f \in V(\mathbb{R}_+, \mathbb{R}^n)$ (using~\cite[Theorem 3.5, Section 4.3]{GripenbergLondenStaffans1990}) and $|\det(I+\mu(\{0\}))|>0$. The last inequality is a consequence of the fact that
    $\mu(\{0\}) =0$, i.e $\mu$ has no atomic part at zero,
    which directly follows from Assumption~\ref{assump:main_ide_well_posedness}.
    \end{proof}
We recall here the definition of exponential stability for the IDE~\eqref{main IDE}.
\begin{defn}[Exponential stability]
 The IDE~\eqref{main IDE} is said to be exponentially stable, if there exist $\nu >0$, and $C\geq1$ such that for all $X_0\in V((-\tau_*, 0), \R^n)$, the solution to~\eqref{main IDE} satisfies, for all $t\geq 0$,
    $$\|X_t\|_V\leq Ce^{-\nu t}\|X_0\|_V.$$
\end{defn}
In the next section, we present the main result of the paper, namely a necessary and sufficient condition ensuring the exponential stability of~\eqref{main IDE}.
\section{Exponential stability criterion for the IDE}
\label{main result}
In this section, we derive the exponential stability criterion for the IDE~\eqref{main IDE} by verifying the conditions required to apply~\cite[Corollary~4.7, Section~4.4]{GripenbergLondenStaffans1990}. This criterion characterizes exponential stability in terms of the location of the zeros of the characteristic equation associated with~\eqref{main IDE}. The proof consists in establishing that the determinant of the characteristic matrix remains uniformly bounded away from zero on the region of the complex plane where it does not vanish. For this purpose, we use~\cite[Proposition~A.5]{chitour:hal-05337430}, which provides a convenient lower bound for such determinants. We begin by introducing the characteristic equation corresponding to~\eqref{main IDE}.

\begin{defn}[Characteristic equation]
    
    The characteristic equation associated to the IDE~\eqref{main IDE} is defined for all $z\in \mathbb{C}$, by
    \begin{equation}\label{Characteristic eq}
       \operatorname{det}\Delta(z) = 0,
    \end{equation}
    where 
    $$\Delta(z) :=I+\hat{\mu}(z)= \Delta_0(z) + R(z),$$
    with $$\Delta_0(z):= I +\sum_{k=1}^\infty A_k e^{-\tau_k z},\quad R(z) := \int_0^{\tau_*}N(s)e^{-s z}ds.$$
\end{defn}

We recall the following lemma, that can be found in \cite[Proposition A.5]{chitour:hal-05337430}. It will be used to check the assumptions of \cite[Corollary~4.7, Section~4.4]{GripenbergLondenStaffans1990}.

\begin{lem}{\cite[Proposition A.5]{chitour:hal-05337430}} \label{lem:lewin_perturbed}
For all $\beta >0$, and all $0<\delta <\beta$, there exists $m(\delta)>0$ such that if $z\in \Omega_{\delta}:=\{z = x+iy: |x|< \beta-\delta,~ y\in \mathbb{R}\}$
is at distance at least $\delta$ from every zero of $\det \Delta (z)$, we have $$|\det \Delta(z)|\geq m(\delta).$$
\end{lem}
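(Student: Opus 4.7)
My plan is to exploit the decomposition $\Delta(z) = \Delta_0(z) + R(z)$ already introduced with the characteristic equation. The key observation is that $\det\Delta_0$ is a (generalized) entire exponential polynomial, whereas the residual term $R(z)=\int_0^{\tau_*}N(s)e^{-sz}ds$ decays to zero as $|\Im z|\to\infty$ uniformly on any vertical strip, by a Riemann--Lebesgue-type argument applied to $N\in L^1((0,\tau_*),\mathbb{R}^{n\times n})$. The estimate will therefore follow by splitting $\Omega_\delta$ into a region of large imaginary part, where $R$ acts as a small perturbation of $\Delta_0$, and a bounded rectangle, which is handled by compactness.

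In the unbounded-$|y|$ region, I would first establish the classical Lewin-type lower bound for $\det\Delta_0$: there exists $m_0>0$ depending only on $\beta$ and $\delta$ such that $|\det\Delta_0(z)|\geq m_0$ whenever $z\in\Omega_\delta$ lies at distance at least $\delta/2$ from every zero of $\det\Delta_0$. This relies on the almost-periodic character of $\det\Delta_0$ along vertical lines and on the fact that its zeros are confined to finitely many horizontal strips. Using the uniform decay of $R(x+iy)$ as $|y|\to\infty$ for $|x|\leq\beta$, one then chooses $M=M(\delta)$ such that $|R(z)|\leq m_0/2$ on $\Omega_\delta\cap\{|y|\geq M\}$. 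A Rouché-type argument shows that every zero of $\det\Delta$ in this region lies within $\delta/2$ of a zero of $\det\Delta_0$, and conversely; hence any $z$ at distance $\geq\delta$ from the zeros of $\det\Delta$ is at distance $\geq\delta/2$ from those of $\det\Delta_0$, and combining the Lewin bound with the smallness of the perturbation yields $|\det\Delta(z)|\geq m_0/2$.

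For the complementary region $\Omega_\delta\cap\{|y|\leq M\}$, the closed rectangle $\{|x|\leq\beta-\delta,\ |y|\leq M\}$ is compact, so the entire function $\det\Delta$ has only finitely many zeros inside it. The subset consisting of points at distance at least $\delta$ from every such zero is itself compact, and $|\det\Delta|$ is continuous and strictly positive there, so it attains a positive minimum. Taking the smaller of the two constants produced in the two regimes provides the required $m(\delta)$.

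The main difficulty is the Lewin-type lower bound for $\det\Delta_0$ with a constant depending only on $\beta$ and $\delta$, despite the infinite sum defining $\Delta_0$. The assumption $\sum_k|A_k|<\infty$ makes $\det\Delta_0$ a uniform limit, on vertical strips, of genuine finite exponential polynomials, but one must check that the classical Lewin estimate passes to this limit with a controlled constant; this will likely require an approximation argument combined again with Rouché, in order to transfer the lower bound from the finite truncations to $\det\Delta_0$.
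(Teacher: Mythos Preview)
Your overall plan—split $\Omega_\delta$ into a compact rectangle (handled by continuity) and a large-$|y|$ region (handled by perturbing off $\Delta_0$)—is reasonable, and the Riemann--Lebesgue decay of $R$ is exactly the right tool. Two issues, one minor and one substantial.

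The minor one: in the perturbation step you bound $|R(z)|$ by $m_0/2$ and then claim $|\det\Delta(z)|\geq m_0/2$. But the determinant is not linear, so what must be small is $|\det\Delta(z)-\det\Delta_0(z)|$, not $|R(z)|$. This is easily patched since $\Delta_0$ is uniformly bounded on the strip, whence $|\det(\Delta_0+R)-\det\Delta_0|\leq C|R|$ with $C$ depending only on $\beta$ and the data, but it should be said.

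The substantial one is the Lewin bound for $\det\Delta_0$ itself. You rightly flag it as the main difficulty, but your sketch (``pass to finite truncations and transfer via Rouch\'e'') does not resolve it: the classical Lewin constant depends on the particular exponential polynomial, and there is no a~priori reason the constants $m_0^{(N)}$ for the truncations are bounded below uniformly in $N$. (Also, the remark that the zeros of $\det\Delta_0$ lie in ``finitely many horizontal strips'' is not correct; within a fixed vertical strip such sums typically have infinitely many zeros with $|y|\to\infty$.) In effect, the Lewin bound you need for $\Delta_0$ is exactly the statement of the lemma with $R\equiv 0$, so the reduction to $\Delta_0$ buys nothing.

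The paper avoids this circularity by proving the bound directly for $\det\Delta$, without any intermediate Lewin step for $\Delta_0$. It argues by contradiction: take $z_j=x_j+iy_j\in\Omega_\delta$ at distance $>\delta$ from the zeros with $\det\Delta(z_j)\to 0$ (necessarily $|y_j|\to\infty$, by your compactness argument on the rectangle). Extract so that $x_j\to x_0$ and, by a diagonal argument using only $\sum_k|A_k|<\infty$, so that $e^{-i\tau_k y_j}\to e^{i\omega_k}$ for every $k$. Then $\det\Delta(\,\cdot\,+iy_j)$ converges uniformly on $[-\beta,\beta]$ to $\det\Delta_0^\omega$, where $\Delta_0^\omega(x)=I+\sum_kA_ke^{-\tau_kx+i\omega_k}$; this limit vanishes at $x_0$ but is not identically zero. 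Hurwitz's theorem then produces zeros of $\det\Delta(\,\cdot\,+iy_j)$ within $\delta$ of $x_0$ for large $j$, contradicting the distance hypothesis on $z_j$. This diagonal-extraction-plus-Hurwitz argument handles the infinite sum in one stroke and is precisely what you would end up needing to justify your black-box Lewin bound for $\Delta_0$.
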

For the sake of completeness, we recall the proof 
of Lemma~\ref{lem:lewin_perturbed} (i.e. \cite[Proposition A.5]{chitour:hal-05337430}) in Appendix~\ref{proof:lewin}.
We now state and prove the stability criterion for the IDE~\eqref{main IDE}.
\begin{thm}[Exponential Stability Criterion]
    \label{stable lemma}
 The IDE~\eqref{main IDE} is exponentially stable if and only if there exists $\nu_0>0$ such that 
 all solutions $z\in \mathbb{C}$ of the characteristic equation~\eqref{Characteristic eq} satisfy $\Re(z)<-\nu_0.$ Moreover, in that case,
 for all $0<\nu<\nu_0$, there exists $C\geq1$ such that for all $X_0\in V((-\tau_*, 0), \R^n)$, the solution to~\eqref{main IDE} satisfies, for all $t\geq 0$,
    $$\|X_t\|_V\leq Ce^{-\nu t}\|X_0\|_V.$$
\end{thm}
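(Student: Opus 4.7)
The plan is to prove the two implications of Theorem~\ref{stable lemma} separately. The necessary direction is handled by exhibiting explicit exponential solutions associated with the characteristic roots, while the sufficient direction reduces, via Proposition~\ref{one to one}, to showing that the resolvent $\rho$ lies in the weighted measure space $M(\R_+, \eta, \R^{n\times n})$ with $\eta(s)=e^{\nu s}$, which is controlled by~\cite[Corollary~4.7, Section~4.4]{GripenbergLondenStaffans1990}.

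For the necessary direction, I would argue by contradiction: assume the IDE is exponentially stable with rate $\nu > 0$, but (up to the choice $\nu_0 = \nu/2$) there exists a characteristic root $z_0$ with $\Re(z_0) > -\nu$. Picking a nonzero vector $v \in \C^n$ with $\Delta(z_0) v = 0$, a direct computation shows that $X(t) = e^{z_0 t} v$ satisfies~\eqref{main IDE} with initial datum $X_0(t) = e^{z_0 t} v$ on $[-\tau_*, 0]$; by linearity and reality of the coefficients, its real and imaginary parts are also solutions and lie in the real state space $V$. Since $X_t(s) = e^{z_0 t} X_0(s)$, one has $\|X_t\|_V = e^{\Re(z_0) t} \|X_0\|_V$ in each admissible state space, and the stability estimate forces $\Re(z_0) \le -\nu$, a contradiction.

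For the sufficient direction, I would fix $0 < \nu < \nu_0$ and aim to apply~\cite[Corollary~4.7, Section~4.4]{GripenbergLondenStaffans1990} with weight $\eta(s) = e^{\nu s}$, whose hypotheses are that $\mu \in M(\R_+, \eta, \R^{n\times n})$ (immediate since $\mu$ is compactly supported on $[0, \tau_*]$) and that $\det \Delta(z)$ is bounded away from $0$ on the closed half-plane $\{\Re(z) \ge -\nu\}$. Once $\rho \in M(\R_+, \eta, \R^{n\times n})$ is obtained, I would combine the explicit formula~\eqref{sol explicit}, the compact support of $f$ on $[0, \tau_*]$, and the standard weighted convolution inequality $\|\rho * f\|_{V_\eta} \le \|\rho\|_{M(\R_+,\eta)} \|f\|_{V_\eta}$ to conclude $\|X_t\|_V \le C e^{-\nu t} \|X_0\|_V$, by transferring the exponential weight on $\rho$ to the $V$-norm of translates of length $\tau_*$.

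The main obstacle is establishing the uniform lower bound on $|\det \Delta(z)|$ over the full half-plane. I would split it into a strip $\{-\nu \le \Re(z) \le K\}$ and a right half-plane $\{\Re(z) \ge K\}$. On the strip, the assumption that every root satisfies $\Re(z) < -\nu_0$ implies a distance at least $\nu_0 - \nu > 0$ from the zero set of $\det \Delta$, so Lemma~\ref{lem:lewin_perturbed} applied with $\beta > K + \nu$ and $\delta = \nu_0 - \nu$ yields the lower bound $m(\delta) > 0$. On the right half-plane, the elementary estimate $|\hat\mu(z)| \le \sum_k |A_k| e^{-\tau_k \Re(z)} + \int_0^{\tau_*} |N(s)| e^{-s \Re(z)} ds$, combined with $\tau_k \ge \tau_1 > 0$ for the discrete sum and dominated convergence for the integral term, shows that $\hat\mu(z) \to 0$ uniformly in $\Im(z)$ as $\Re(z) \to +\infty$, so $|\det \Delta(z)| \ge 1/2$ for $K$ large enough. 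Combining the two bounds provides the uniform lower bound required by Corollary~4.7, and the rest of the proof is a careful verification of the weighted convolution estimates described above.
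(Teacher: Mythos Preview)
Your proposal is correct and follows essentially the same route as the paper: the sufficient direction via \cite[Corollary~4.7, Section~4.4]{GripenbergLondenStaffans1990}, with the hypothesis $\inf_{\Re(z)\ge -\nu}|\det\Delta(z)|>0$ verified by combining Lemma~\ref{lem:lewin_perturbed} on a bounded strip and the limit $\det\Delta(z)\to 1$ as $\Re(z)\to+\infty$, and then the weighted-convolution estimate to pass from $\rho\in M(\R_+,\eta)$ to the decay of $\|X_t\|_V$, is exactly what the paper does (your explicit strip/right-half-plane split is in fact a cleaner statement of what the paper compresses into one line).

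One small wrinkle in your necessary direction: the identity $X_t(s)=e^{z_0 t}X_0(s)$, and hence $\|X_t\|_V=e^{\Re(z_0)t}\|X_0\|_V$, holds for the \emph{complex} exponential $X(t)=e^{z_0 t}v$, not for its real part. So either make explicit that the stability estimate extends to $\C^n$-valued data (which is what your remark on real and imaginary parts is for, but you should say so), or proceed as the paper does: work with $Y(t)=\Re(e^{z_0 t}v)$, write $\|Y_t\|_V=e^{\Re(z_0)t}h_0(t)$ with $h_0(t)=\|s\mapsto\Re(e^{i\Im(z_0)t+z_0 s}v)\|_V$ continuous and periodic, and use that $h_0$ attains a strictly positive maximum along a sequence $t_k\to\infty$ to reach the contradiction.
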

\begin{proof}
Suppose the IDE~\eqref{main IDE} is exponentially stable, i.e. there exists $\nu >0$, $C\geq1$ such that for all $X_0\in V((-\tau_*, 0), \R^n)$, the solution to~\eqref{main IDE} satisfies, for all $t\geq 0$,
    $$\|X_t\|_V\leq Ce^{-\nu t}\|X_0\|_V.$$ Now suppose by contradiction that  
    for all $\nu_0>0$, there exists $z_0\in\mathbb{C}$ solution of the characteristic equation~\eqref{Characteristic eq} such that $\Re(z_0)+\nu_0 \geq0$.
    Set $\nu_0 = \nu/2$. Then, \begin{equation}\label{ineq:nu_plus_z0}\Re(z_0)+\nu >0.\end{equation}
    Then, the conjuguate $\Bar{z}_0$ is also a solution of the characteristic 
    equation~\eqref{Characteristic eq}.
Let  $v_0\in \mathbb{C}^n$ be a corresponding eigenvector, i.e. $v_0\neq0$ and,
\[
\Big(I + \sum_{k=1}^\infty A_k e^{-z_0 \tau_k} + \int_0^{\tau_*} N(s) e^{-z_0 s} ds \Big) v_0 = 0.
\]
Then the real vector-valued function
\[
X(t) := \frac{1}{2} \Big( e^{z_0 t} v_0 + e^{\bar{z}_0 t} \bar{v}_0 \Big) = \Re\big(e^{z_0 t} v_0\big),
\]
is a solution of the IDE~\eqref{main IDE}.
For all $t\geq 0$, for all $s\in [-\tau_*, 0]$,
$$X_t(s) = \Re(e^{z_0(t+s)}v_0) = e^{\Re(z_0)t}\Re(e^{i\Im(z_0)t+z_0s}v_0).$$
Therefore, for all $t\geq 0$,
$$\|X_t\|_V =e^{\Re(z_0)t}h_0(t),$$
with 
$$h_0(t):=\|s\mapsto \Re(e^{i\Im(z_0)t+z_0s}v_0)\|_V.$$
If $\Im(z_0)= 0$, then $h_0$ is constant. Otherwise, $h_0$ is a periodic continuous function with period $T_0:= 2\pi/|\Im(z_0)|$. In all cases, $h_0$ attains its maximum, denoted $M_0$, at $t_0\geq 0$. Since $v_0\neq 0$, we have $M_0>0$.
Denotes for all $k\in \N$, 
$t_k:= t_0 + kT_0$. Then,
$$h_0(t_k) = M_0, \quad \lim_{k\to +\infty}t_k = +\infty.$$
Hence, using the exponential stability of the IDE~\eqref{main IDE}, for all $k\in \N$,
$$e^{\Re(z_0)t_k}h_0(t_k)=\|X_{t_k}\|_V \leq Ce^{-\nu t_k}\|X_0\|_V,$$
i.e
$$e^{(\Re(z_0)+\nu)t_k}\leq \frac{C}{M_0}\|X_0\|_V.$$
Due to~\eqref{ineq:nu_plus_z0}, $\Re(z_0)+\nu>0$. Therefore, taking $k$ large enough leads us the contradiction.
\medskip

    Let us show the converse.
    Using Proposition~\ref{one to one}, let $x$ be the unique solution to~\eqref{IDE measure form} with $f$ given by~\eqref{def:f_IDE_measure}.
    Then, for all $t\geq -\tau_*$, 
\begin{equation}\label{1t1}X(t)= x(t)\mathbbb{1}_{t\geq 0}(t) + X_0(t)\mathbbb{1}_{-\tau_*\leq t<0}(t).\end{equation}
    Let $\nu>0$ be such that $\delta:=\nu_0-\nu >0$, and define $\eta(s) = e^{\nu s}$. The weight function $\eta$ is a self-dominated sub-multiplicative function (see~\cite[Definition 3.2, Section 4.2]{GripenbergLondenStaffans1990}).
   To establish the exponential stability of the solution \(X\) to~\eqref{main IDE}, we first show that the corresponding solution \(x\) of~\eqref{IDE measure form} belongs to \(V(\mathbb{R}_+, \eta, \mathbb{R}^n)\).  
This follows from the existence of a constant \(C > 0\) such that  
\begin{equation} \label{ineg x}
    \|x\|_{V_\eta(\R_+)} \leq C \|X_0\|_{V(-\tau_*,0)}.
\end{equation}
    We will use the formula for the solution $x$ of equation~\eqref{IDE measure form} given in Proposition~\ref{one to one}. Let us check the requirements to apply~\cite[Corollary 4.7, Section 4.4]{GripenbergLondenStaffans1990}. The measure $\mu$ has compact support in $[0, \tau_*]$, hence  $\mu$ is in $M(\mathbb{R}_+, \eta, \mathbb{R}^{n\times n})$ and $M(\mathbb{R}, \eta, \mathbb{R}^{n\times n})$. 
     The measure $\mu$ is in the form of~\cite[Equation (2.1) Section 3.2]{GripenbergLondenStaffans1990} with no singular part.
   Since, by dominated convergence, 
    $$\lim _{\Re(z)\to +\infty} |R(z)| = 0,$$
    we have by continuity of the determinant,
    $$\lim _{\Re(z)\to +\infty} \det \Delta(z)=\lim _{\Re(z)\to +\infty} \det \Delta_0(z) =\det I = 1\neq0. $$
    Hence, using Lemma~\eqref{lem:lewin_perturbed},
\begin{equation}\inf_{\Re(z) \geq -\nu}|\det (I+\hat{\mu}(z))|>0.\label{eq:infhatmu}\end{equation}
Then, we can apply~\cite[Corollary 4.7, Section 4.4]{GripenbergLondenStaffans1990}. Therefore, the resolvent $\rho$ is in $M(\mathbb{R}_+, \eta)$.
Using the explicit formula of the solution $x$ of equation~\eqref{IDE measure form} given in Proposition~\ref{one to one}, we obtain applying~\cite[Theorem 3.5, Section 4.3]{GripenbergLondenStaffans1990},
\begin{align*}
     \|x\|_{V_{\eta}(\R_+)}&= \|f-\rho*f\|_{V_{\eta}(\R_+)}\\
     &\leq (1+\|\rho\|_{M(\mathbb{R}_+,\eta)})\|f\|_{V_{\eta}(\R_+)}\\
     &\leq (1+\|\rho\|_{M(\mathbb{R}_+,\eta)})\|\mu*X_0\|_{V_{\eta}(\R)}\\
     &\leq (1+\|\rho\|_{M(\mathbb{R}_+,\eta)})\|\mu\|_{M(\mathbb{R},\eta)}\|X_0\|_{V_{\eta}(\R)}.
\end{align*}
Because, 
$$\|X_0\|_{V_{\eta}(\R)} =\|\eta X_0\|_{V(-\tau_*,0)} \leq \sup_{s\in[-\tau_*, 0]}e^{\nu s }\|X_0\|_{V(-\tau_*,0)} \leq \|X_0\|_{V(-\tau_*,0)},$$
the inequality~\eqref{ineg x} is proved.
Moreover, using~\eqref{1t1}, we obtain
for all $t\geq0$
\begin{align*}
    \|X_t\|_{V(-\tau_*,0)}&=\|X\|_{V(t-\tau_*, t)}\\
    &\leq \sup_{s\in [t-\tau_*, t]}e^{-\nu s}\|\eta X\|_{V(t-\tau_*, t)}\\
    &\leq e^{-\nu (t-\tau_*)}(\|x\|_{V_\eta(\R_+)}+\|X_0\|_{V_\eta(\R)})\\
    &\leq Ce^{-\nu t}\|X_0\|_{V(-\tau_*,0)},
\end{align*}
with $C = e^{\nu \tau_*}\big((1+\|\rho\|_{M(\mathbb{R}_+,\eta)})\|\mu\|_{M(\mathbb{R},\eta)}+1\big).$
\end{proof}
\section{Well-posedness and an exponential stability criterion for a DDE}
\label{critere_DDE}
In this section, we explain how the approach developed in~\cite{GripenbergLondenStaffans1990} can be used to state the same exponential stability criterion (Theorem~\ref{stable lemma}) but for a DDE.
 From now on, for $C\subset \R$ with nonempty interior, $V(C, \R^n)$ is a Banach space from the list:
    \begin{enumerate}
        \item $L^p(C, \R^n)$, for $p\in [1,+\infty]$; 
        \item $BV(C, \R^n)$.
    \end{enumerate}  
The space $B^\infty$ is not considered here, as the results of~\cite{GripenbergLondenStaffans1990} do not allow one to draw conclusions in this setting.
We consider the following DDE, for any $(X_0, x_0) \in V([-\tau_*,0], \R^n)\times \R^n$,
\begin{equation}\label{main DDE}
\begin{cases}
 X'(t) + \sum_{k=1}^{\infty} A_k X(t-\tau_k) + 
 \int_0^{\tau_*} N(s) X(t-s)\,ds = 0,
 \quad \textit{ $t>0$},\\[4pt]
 X(t) = X_0(t), \quad \textit{$-\tau_* \le t < 0 $
 },\\[4pt]
 X(0) =x_0
\end{cases}
\end{equation}
A function $X$ is a solution of~\eqref{main DDE} if, $X$ is locally absolutely continuous, for all $t > 0$, we have $X'_t \in V([-\tau_*,0],\mathbb{R}^n)$, and the following conditions hold:  
when $V = B^\infty$, equation~\eqref{main DDE} must hold for all $t \ge -\tau_*$;  
when $V = BV$ or $V = L^p$, equation~\eqref{main IDE} is required to hold for almost every $t$.
In the case of $V$ being a Lebesgue space, one could also consider $M^p$ spaces for the initial data of~\eqref{main DDE} (see e.g.~\cite{DelfourMitter1972HereditaryI}).
Again, we can rewrite~\eqref{main DDE} in the formalism of~\cite{GripenbergLondenStaffans1990}, the well-posedness is then a consequence of a result from the same reference.
\begin{prop} \label{one to one:DDE}
For all  initial data $(X_0,x_0)\in V([-\tau_*, 0], \R^n)\times \R^n$, if $X$ is a solution of~\eqref{main DDE} then $x(t):=X(t)\mathbbb{1}_{t\geq 0}(t)$ is a solution of
    \begin{equation}
        \label{DDE measure form}
    x'(t) +\int_0^t \mu(ds)x(t-s) = f(t),\quad x(0)=x_0, \quad t\geq 0,
    \end{equation}
where $\mu$ and $f$ are given in Proposition~\ref{one to one}.
Conversely, if $x$ is a solution to~\eqref{IDE measure form}, then $$X(t)= x(t)\mathbbb{1}_{t> 0}(t) + X_0(t)\mathbbb{1}_{-\tau_*\leq t<0}(t),$$
with $X(0)=x_0$,
is a solution to~\eqref{main IDE}.
Moreover, there is a unique locally absolutely continuous solution $x$ to equation~\eqref{IDE measure form} with $x' \in V_{\textit{loc}}(\mathbb{R}_+, \mathbb{R}^{n})$. The solution is given by 
\begin{equation}
    \label{sol explicit:neutral}
    x(t) = r(t)x_0 +(r*f)(t), \quad t\geq 0,
\end{equation}
where $r$ is the differential resolvent of $\mu$, i.e. the unique solution of $$r' +\mu*r = r' +r*\mu = 0, \quad r(0)=I,$$
such that $r$ is locally absolutely continuous on $\R^+$. Furthermore, $r'$ is equal almost everywhere to a function in $BV_{\textit{loc}}(\R^+, \R^{n\times n}).$
\end{prop}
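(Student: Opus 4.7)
The plan is to mirror the proof of Proposition~\ref{one to one} as closely as possible, adjusting for the differentiated equation and for the extra initial condition $X(0)=x_0$ at the jump point.

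First, I would establish the one-to-one correspondence by the same splitting argument. Given a solution $X$ of~\eqref{main DDE}, set $x(t):=X(t)\mathbbb{1}_{t\ge 0}(t)$; then $x$ is locally absolutely continuous on $[0,+\infty)$ with $x(0)=x_0$. For $t>\tau_*$, $X_0$ does not contribute to any delay term, so $f(t)=0$ and~\eqref{DDE measure form} reduces to the differentiated delay equation in $x$. For $0\le t\le\tau_*$, the same splitting as in Proposition~\ref{one to one} — separating in each delayed term the contributions from $[0,t]$ (handled by $x$) and from $[-\tau_*,0)$ (handled by $X_0$) — yields~\eqref{DDE measure form} with source $f$ as in~\eqref{def:f_IDE_measure}. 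The converse direction works in exactly the same way, using $X(0)=x_0$ to handle the jump at $t=0$.

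Second, for existence, uniqueness, and the representation formula~\eqref{sol explicit:neutral}, I would invoke the differential-resolvent theory for measure-kernel Volterra equations developed in~\cite[Chapter~4]{GripenbergLondenStaffans1990}. The required hypotheses $\mu\in M_{loc}(\R_+,\R^{n\times n})$ and $\mu(\{0\})=0$ have already been verified in the proof of Proposition~\ref{one to one}, and the membership $f\in V_{loc}(\R_+,\R^n)$ follows from $X_0\in V([-\tau_*,0],\R^n)$ combined with the compact support of $\mu$. This produces a unique locally absolutely continuous differential resolvent $r$ satisfying $r'+\mu*r=r'+r*\mu=0$ and $r(0)=I$, and the variation-of-constants formula $x(t)=r(t)x_0+(r*f)(t)$ delivers the unique locally absolutely continuous solution.

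Third, for the regularity statement on $r'$, I would start from the almost-everywhere identity $r'=-r*\mu$. Decomposing $\mu=\sum_k A_k\delta_{\tau_k}+N(s)\,ds$ yields
\[
r'(t) \;=\; -\sum_{k:\,\tau_k\le t} A_k\, r(t-\tau_k) \;-\; \int_0^t r(t-s)\,N(s)\,ds \qquad \text{for a.e. } t\ge 0.
\]
The first sum is a step function with jumps $A_k$ at $\tau_k$ between which $t\mapsto A_k r(t-\tau_k)$ is locally absolutely continuous; since $\sum_k|A_k|<\infty$ and $r$ is locally bounded, this sum belongs to $BV_{loc}(\R^+,\R^{n\times n})$. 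The second term is locally absolutely continuous (hence locally BV) by $N\in L^1$ and $r$ locally bounded. The sum of the two provides the claimed $BV_{loc}$ representative.

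The main obstacle I anticipate is the precise invocation of the GLS differential-resolvent theorem in each admissible state space $L^p$ and $BV$: one must check that the regularity of $f$, and therefore of the convolution $r*f$, propagates correctly into $V_{loc}(\R_+,\R^n)$. This is also why $B^\infty$ has to be excluded here: local absolute continuity of $X$, imposed to make sense of $X'$, does not interact as cleanly with Borel measurable bounded functions as it does with $L^p$ or $BV$, so the corresponding machinery of~\cite[Chapter~4]{GripenbergLondenStaffans1990} is not directly available in that setting.
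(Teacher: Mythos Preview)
Your proposal is correct and follows essentially the same approach as the paper: both reduce the correspondence to the IDE case of Proposition~\ref{one to one} and then invoke the differential-resolvent theory of~\cite{GripenbergLondenStaffans1990} for existence, uniqueness, and the variation-of-constants formula. The paper's proof simply cites~\cite[Theorems~3.1 and~3.2, Section~3.3]{GripenbergLondenStaffans1990} for the entire second part, including the $BV_{\textit{loc}}$ regularity of $r'$, which you instead unpack explicitly via the decomposition $r'=-r*\mu$.
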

\begin{proof}
    The equivalence between the equations~\eqref{main DDE} and~\eqref{DDE measure form} is proved the same way as in the IDE case (see Proposition~\ref{one to one}). The rest of the proof follows from a direct application of~\cite[Theorem 3.1 and 3.2, Section 3.3]{GripenbergLondenStaffans1990}.
\end{proof}
We give the definition of exponential stability for the DDE~\eqref{main DDE}.
\begin{defn}[Exponential stability]
 The DDE~\eqref{main DDE} is said to be exponentially stable, if there exist $\nu_0 >0$, 
 $C\geq1$ such that for all $(X_0,x_0)\in V((-\tau_*, 0), \R^n)\times \R^n$, the solution to~\eqref{main DDE} satisfies, for all $t\geq 0$,
    $$\|X_t\|_V + |X(t)| \leq Ce^{-\nu_0 t}(\|X_0\|_V+|x_0|).$$
\end{defn}
We define the characteristic equation associated for the DDE~\eqref{main DDE}.
\begin{defn}[Characteristic equation]
    
    The characteristic equation associated to the DDE~\eqref{main DDE} is defined for all $z\in \mathbb{C}$, by
    \begin{equation}\label{Characteristic eq:DDE}
       \operatorname{det}\Delta(z) = 0,
    \end{equation}
    where 
    $$\Delta(z) :=zI+\hat{\mu}(z).$$
\end{defn}
We end this section by stating the exponential stability criterion of the DDE~\eqref{main DDE}.
\begin{thm}[Exponential Stability Criterion]
    \label{stable lemma:DDE}
 The DDE~\eqref{main DDE} is exponentially stable if and only if there exists $\nu_0>0$ such that 
 all solutions $z\in \mathbb{C}$ of the characteristic equation~\eqref{Characteristic eq:DDE} satisfy $\Re(z)<-\nu_0.$ Moreover, in that case, there exists $C\geq1$ such that for all $(X_0,x_0)\in V((-\tau_*, 0), \R^n)\times \R^n$, the solution to~\eqref{main DDE} satisfies, for all $t\geq 0$,
    $$\|X_t\|_V + |X(t)|\leq Ce^{-\nu_0 t}(\|X_0\|_V+|x_0|).$$
\end{thm}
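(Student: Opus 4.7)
The proof closely mirrors that of Theorem \ref{stable lemma}, with the measure equation \eqref{IDE measure form} replaced by its differential counterpart \eqref{DDE measure form} from Proposition \ref{one to one:DDE} and the resolvent $\rho$ replaced by the differential resolvent $r$. I would argue necessity and sufficiency separately.

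For necessity, I would repeat the contradiction argument from Theorem \ref{stable lemma} almost verbatim. Assume exponential stability at rate $\nu$, pick a root $z_0$ of \eqref{Characteristic eq:DDE} with $\Re(z_0) + \nu/2 \geq 0$, and a corresponding $v_0 \in \mathbb{C}^n \setminus \{0\}$ with $(z_0 I + \hat{\mu}(z_0)) v_0 = 0$. The function $X(t) = \Re(e^{z_0 t} v_0)$ is locally absolutely continuous and, combining direct differentiation with the eigenvector relation, satisfies $X'(t) + \sum_{k=1}^\infty A_k X(t-\tau_k) + \int_0^{\tau_*} N(s) X(t-s)\,ds = \Re\bigl(e^{z_0 t}(z_0 I + \hat{\mu}(z_0)) v_0\bigr) = 0$, hence solves \eqref{main DDE} with $X_0 = X|_{[-\tau_*, 0)}$ and $x_0 = \Re(v_0)$. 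The rest of the argument — the identity $\|X_t\|_V = e^{\Re(z_0) t} h_0(t)$ with $h_0$ constant or periodic with positive maximum, and evaluation along a sequence $t_k \to \infty$ — is identical to the IDE case, and the extra $|X(t)|$ term in the DDE stability definition is of the same exponential order and is absorbed trivially.

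For sufficiency, Proposition \ref{one to one:DDE} gives $x(t) = r(t) x_0 + (r * f)(t)$, so the task reduces to showing that the differential resolvent $r$ lies in a weighted space with weight $\eta(s) = e^{\nu s}$ for any $\nu \in (0, \nu_0)$, in a sense strong enough that $r(t) = O(e^{-\nu t})$ pointwise and $r * f \in V(\R_+, \eta, \R^n)$. Once this is in hand, the closing chain of inequalities of Theorem \ref{stable lemma} transfers line by line, with the convolution estimate of \cite[Theorem 3.5, Section 4.3]{GripenbergLondenStaffans1990} bounding $r * f$ and the pointwise decay of $r$ bounding the $r(t) x_0$ contribution, yielding the required estimate on $\|X_t\|_V + |X(t)|$. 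The tool for the exponential decay of $r$ is the differential-resolvent analogue of \cite[Corollary 4.7, Section 4.4]{GripenbergLondenStaffans1990}, whose hypothesis is that $\det \Delta(z) = \det(zI + \hat{\mu}(z))$ is nonzero on $\{\Re(z) \geq -\nu\}$ with $\Delta(z)^{-1}$ suitably bounded there.

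The nonvanishing is exactly the theorem's assumption. The uniform control of $\Delta(z)^{-1}$ differs slightly from the IDE case because $|\det \Delta(z)| \sim |z|^n$ at infinity rather than tending to $1$: I would split $\{\Re(z) \geq -\nu\}$ into a large disk $\{|z| \leq R\}$ and its complement. On the complement, the compact support of $\mu$ bounds $\hat{\mu}$ uniformly on the whole half-plane, so $\Delta(z) = zI + \hat{\mu}(z)$ is dominated by $zI$ and $\|\Delta(z)^{-1}\| = O(|z|^{-1})$. On the compact part, the zeros of $\det \Delta$ are finite in number and located in $\{\Re(z) < -\nu_0\}$ by assumption, so a direct Rouché-type lower bound — the variant of Lemma \ref{lem:lewin_perturbed} suited to the $zI$ term, which prevents direct reuse of the IDE statement — produces a uniform lower bound on $|\det \Delta|$ and hence the required bound on $\|\Delta^{-1}\|$. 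The main obstacle is isolating the precise GLS statement appropriate to the differential resolvent in each of the admissible weighted spaces $V(\R_+, \eta)$ (Lebesgue and $BV$) and verifying the regularity of $r$ and $r'$ needed to apply the convolution bounds; no new conceptual ingredient beyond Theorem \ref{stable lemma} should be required.
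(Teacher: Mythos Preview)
Your necessity argument is the paper's, verbatim. For sufficiency your approach is correct but takes a different route. You propose to invoke a differential-resolvent analogue of \cite[Corollary~4.7, Section~4.4]{GripenbergLondenStaffans1990} and then verify by hand that $\|\Delta(z)^{-1}\|$ is uniformly bounded on $\{\Re(z)\ge -\nu\}$, splitting into a large-$|z|$ region (where $\Delta(z)\sim zI$) and a compact remainder. The paper instead applies \cite[Theorem~4.13, Section~4.4]{GripenbergLondenStaffans1990}, whose hypothesis for the differential resolvent is \emph{only} that $\det(zI+\hat\mu(z))\neq 0$ on $\{\Re(z)\ge -\nu_0\}$; no inf-type bound needs to be checked. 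This is exactly the point of the Remark following the theorem: it explains why the DDE estimate holds at the rate $\nu_0$ itself, whereas your route, mirroring the IDE proof, forces a margin $\nu<\nu_0$ and therefore yields a slightly weaker ``Moreover'' clause than the one stated.

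Two smaller remarks. On the bounded region $\{\Re(z)\ge -\nu,\ |z|\le R\}$ you do not need any Rouch\'e-type variant of Lemma~\ref{lem:lewin_perturbed}: the set is compact, $\det\Delta$ is continuous and has no zero there by hypothesis, so a positive minimum is immediate. And the phrase ``the closing chain of inequalities transfers line by line'' understates the work: because $r$ is a function rather than a measure, the paper treats $\|r*f\|_{V_\eta}$ separately for $V=L^p$ (Young's inequality with $r\in L^1_\eta$) and $V=BV$ (via $r'\in L^1_\eta$ and the compact support of $f$), and the pointwise bound on $|X(t)|$ requires $r\in L^\infty_\eta$, obtained from $r,r'\in L^1_\eta$ and the fundamental theorem of calculus. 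You anticipate this case-splitting in your last paragraph, but it is where the actual bookkeeping lies.
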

\begin{rem}
In Theorem~\eqref{stable lemma:DDE}, in contrast with Theorem~\ref{stable lemma}, no margin $\nu < \nu_0$ is required. 
This is a direct consequence of the fact that, for the DDE~\eqref{main DDE},~\cite[Theorem 4.13, Section 4.4]{GripenbergLondenStaffans1990} applies without the need to verify a condition like the one proved in Lemma~\eqref{lem:lewin_perturbed}.
\end{rem}
\begin{proof}
    The direct implication is proved exactly like in the IDE case.
    For the other direction, we define the same exponential weight $\eta(s):= e^{\nu_0 s}$. 
    Using Proposition~\ref{one to one:DDE}, let $x$ be the unique solution to~\eqref{DDE measure form} with $f$ given by~\eqref{def:f_IDE_measure}.
    Then, for all $t\geq -\tau_*$, we have
\begin{equation}\label{1t1:DDE}X(t)= x(t)\mathbbb{1}_{t\geq 0}(t) + X_0(t)\mathbbb{1}_{-\tau_*\leq t<0}(t),\quad x(0) =x_0.\end{equation}
Let us denote $r$ the differential resolvent of $\mu$. Applying~\cite[Theorem 4.13, Section 4.4]{GripenbergLondenStaffans1990}, we obtain $r \in L^1(\R^+, \eta, \R^{n\times n}).$
    Moreover, $r'\in L^1(\R^+, \eta, \R^{n\times n})$, and $r'$ is almost everywhere equal to a function in $BV(\R^+, \eta, \R^{n\times n})$.
Hence, $r\in L^\infty(\R^+, \eta, \R^{n\times n})\cap L^1(\R^+, \eta, \R^{n\times n})$ using the fundamental theorem of calculus; therefore, $r\in L^p(\R^+, \eta, \R^{n\times n})$ for all $p\in [1,+\infty]$. Moreover, 
it is a classical fact that if $r'\in L^1(\R_+, \eta, \R^{n\times})$, then the total variation of $r\in L^1(\R_+, \eta, \R^{n\times})$ is smaller or equal to the norm of $r'$ in $ L^1(\R_+, \eta, \R^{n\times})$; hence $r\in BV(\R^+, \eta, \R^{n\times n})$.

We now use the explicit formula for the solution $x$ of equation~\eqref{DDE measure form} given in Proposition~\ref{one to one:DDE}, Young's convolution inequality, and~\cite[Theorem 3.5, Section 4.3]{GripenbergLondenStaffans1990} to bound $\|x\|_{V_\eta(\R_+)}$ as follows.
In the case $V= L^p$, we have  
    \begin{align*}
     \|x\|_{L^p_{\eta}(\R_+)}&= \|rx_0+r*f\|_{L^p_{\eta}(\R_+)}\\
     &\leq \|r\|_{L^p_\eta(\R_+)}|x_0|+\|r\eta *f\eta\|_{L^p(\R_+)}
     \\
     &\leq \|r\|_{L^p_\eta(\R_+)}|x_0|+\|r\|_{L^1_\eta(\R_+)}\|f\|_{L^p_\eta(\R_+)}\\
     &\leq \|r\|_{L^p_\eta(\R_+)}|x_0| +\|r\|_{L^1_\eta(\R_+)} \|\mu\|_{M(\R,\eta)}\|X_0\|_{L^p_\eta(\R)}.
\end{align*}
In the case $V = BV$, we get
  \begin{align*}
     \|x\|_{BV_{\eta}(\R_+)}&= \|rx_0+r*f\|_{BV_{\eta}(\R_+)}
     \\
     &\leq \|r\|_{BV_\eta(\R_+)}|x_0|+\|r'*f\|_{L^1_\eta(\R_+)}\\
     &\leq \|r\|_{BV_\eta(\R_+)}|x_0|+\|r'\eta*f\eta\|_{L^1(\R_+)}\\
       &\leq \|r\|_{BV_\eta(\R_+)}|x_0|+\|r'\|_{L^1_\eta(\R_+)}\|f\|_{L^1_\eta(\R_+)}\\
       &\leq \|r\|_{BV_\eta(\R_+)}|x_0|+\|r'\|_{L^1_\eta(\R_+)}\tau_*\|f\|_{BV_\eta(\R_+)}\\
     &\leq \|r\|_{BV_\eta(\R_+)}|x_0| +\|r'\|_{L^1_\eta(\R_+)} \tau_*\|\mu\|_{M(\R,\eta)}\|X_0\|_{BV_\eta(\R)},
\end{align*}
where we have used the fact that $f$ is compactly supported in $[0, \tau_*]$, which implies
$$\|f\|_{L^1_\eta(\R_+)}\leq \tau_*\|f\|_{L^\infty_\eta(\R_+)}\leq \tau_*\|f\|_{BV_\eta(\R_+)}.$$
Furthermore, we have
$$\|X_0\|_{V_{\eta}(\R)} =\|\eta X_0\|_{V(-\tau_*,0)} \leq \sup_{s\in[-\tau_*, 0]}e^{\nu_0 s }\|X_0\|_{V(-\tau_*,0)} \leq \|X_0\|_{V(-\tau_*,0)}.$$
Hence, using~\eqref{1t1:DDE}, there exists $c>0$, and $K>0$ such that, 
for all $t\geq0$,
\begin{align*}
    \|X_t\|_{V(-\tau_*,0)}&=\|X\|_{V(t-\tau_*, t)}\\
    &\leq \sup_{s\in [t-\tau_*, t]}e^{-\nu_0 s}\|\eta X\|_{V(t-\tau_*, t)}\\
    &\leq e^{-\nu_0 (t-\tau_*)}(\|x\|_{V_\eta(\R_+)}+\|X_0\|_{V_\eta(\R)})\\
    &\leq  e^{-\nu_0 (t-\tau_*)}( c|x_0| + (K+1)\|X_0\|_{V(-\tau_*,0)}).
\end{align*}
On the other hand, for all $t\geq 0$,
\begin{align}
    |X(t)| &= |x(t)|\nonumber\\
    &\leq |r(t)x_0| + |(r*f)(t)| \nonumber\\
    &= e^{-\nu_0 t}\big(|\eta(t) r(t)||x_0| +|(r\eta*f\eta)(t)|\big) \nonumber\\
    &\leq e^{-\nu_0 t}\sup_{t \in \R^+}|\eta(t) r(t)|\big(|x_0| +\|f\eta\|_{L^1}\big) \tag{$\sup_{t \in \R^+}|\eta(t) r(t)|<+\infty$}\\
    &\leq e^{-\nu_0 t}\sup_{t \in \R^+}|\eta(t) r(t)|\big(|x_0| +\|\mu *X_0\|_{L^1_\eta}\big) \nonumber\\
    &\leq e^{-\nu_0 t}\sup_{t \in \R^+}|\eta(t) r(t)|\big(|x_0| +\|\mu\|_{M(\R, \eta)}\|X_0\|_{L^1_\eta}\big) \tag{from \cite[Theorem 3.5, Section 4.3]{GripenbergLondenStaffans1990}}.
\end{align} 
Finally, if $V=L^p$, since $X_0$ is compactly supported, Hölder inequality yields $\|X_0\|_{L^1_\eta}\leq C\|X_0\|_{V}$ for some $C\geq0$. If $V=BV$, then $\|X_0\|_{L^1_\eta}\leq C\|X_0\|_{L^\infty}\leq C\|X_0\|_{BV}$.
Thus, in any case,
\begin{equation}
    |X(t)| \leq e^{-\nu_0 t}\sup_{t \in \R^+}|\eta(t) r(t)|\big(|x_0| +C\|\mu\|_{M(\R, \eta)}\|X_0\|_{V}\big).
    \label{ligne_finale}
\end{equation}
This concludes the proof.
\end{proof}
\section{Conclusion}
In this article, we have established a spectral criterion for the exponential stability of IDEs and DDEs in several state-space settings, relying on the framework developed in~\cite[Chapter~4]{GripenbergLondenStaffans1990}. 
Whereas this criterion is classical in the standard space of continuous functions, our contribution lies in extending its validity to significantly broader functional spaces. 
This extension is crucial for the PDE applications discussed in the introduction, where Lebesgue spaces arise naturally from the underlying modeling.
\section*{Acknowledgment}
The authors would like to thank Guilherme Mazanti for his valuable comments and suggestions during the preparation of this manuscript.
\appendix
\section{Proof of Lemma~\ref{lem:lewin_perturbed}} 
\label{proof:lewin}
We first establish the following auxiliary proposition.
\begin{prop} For all $\beta>0$, $$\lim_{|y|\to \infty} \sup_{x\in [-\beta, \beta]}|R(x+iy)| = 0.$$ \label{prop:rm}
\end{prop}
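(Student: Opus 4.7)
The quantity $R(x+iy)=\int_0^{\tau_*} N(s)\,e^{-sx}\,e^{-isy}\,ds$ is, for each fixed $x$, the Fourier transform (in $y$) of the $L^1$ function $s\mapsto N(s)e^{-sx}\mathbf{1}_{[0,\tau_*]}(s)$, so the Riemann–Lebesgue lemma already yields pointwise convergence to $0$ as $|y|\to\infty$. The only issue is to make this convergence uniform in $x\in[-\beta,\beta]$, so the plan is a standard $\varepsilon/2$ argument combining a uniform $L^1$ bound with a smooth approximation.

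First I would record the uniform control $|N(s)e^{-sx}|\le e^{\beta\tau_*}|N(s)|$ for every $x\in[-\beta,\beta]$ and $s\in[0,\tau_*]$, so that the map $x\mapsto s\mapsto N(s)e^{-sx}\mathbf{1}_{[0,\tau_*]}(s)$ is uniformly bounded in $L^1(\mathbb{R})$ by $e^{\beta\tau_*}\|N\|_{L^1}$. Given $\varepsilon>0$, by density of $C_c^\infty((0,\tau_*),\mathbb{R}^{n\times n})$ in $L^1((0,\tau_*),\mathbb{R}^{n\times n})$, I would choose $N_\varepsilon\in C_c^\infty((0,\tau_*),\mathbb{R}^{n\times n})$ with $\|N-N_\varepsilon\|_{L^1}\le \varepsilon e^{-\beta\tau_*}$, and split
\[
R(x+iy)=\int_0^{\tau_*}(N(s)-N_\varepsilon(s))e^{-sx}e^{-isy}\,ds+\int_0^{\tau_*}N_\varepsilon(s)e^{-sx}e^{-isy}\,ds.
\]
The first term is bounded by $e^{\beta\tau_*}\|N-N_\varepsilon\|_{L^1}\le\varepsilon$ uniformly in $(x,y)\in[-\beta,\beta]\times\mathbb{R}$.

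For the second term I would integrate by parts once in $s$: since $N_\varepsilon$ has compact support in $(0,\tau_*)$, the boundary terms vanish, and for $y\neq 0$
\[
\int_0^{\tau_*}N_\varepsilon(s)e^{-sx}e^{-isy}\,ds=\frac{1}{iy}\int_0^{\tau_*}\bigl(N_\varepsilon'(s)-xN_\varepsilon(s)\bigr)e^{-sx}e^{-isy}\,ds,
\]
which yields an upper bound of the form $C(\beta,\varepsilon)/|y|$, where $C(\beta,\varepsilon)=e^{\beta\tau_*}\bigl(\|N_\varepsilon'\|_{L^1}+\beta\|N_\varepsilon\|_{L^1}\bigr)$ is independent of $x\in[-\beta,\beta]$. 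Choosing $|y|\ge C(\beta,\varepsilon)/\varepsilon$ makes this term at most $\varepsilon$, so $\sup_{x\in[-\beta,\beta]}|R(x+iy)|\le 2\varepsilon$ for all sufficiently large $|y|$, which gives the claim.

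The only mildly delicate point is the uniformity in $x$; the domination $|e^{-sx}|\le e^{\beta\tau_*}$ on $[0,\tau_*]$ makes it automatic once the density/integration-by-parts split is in place. No further obstacle is expected.
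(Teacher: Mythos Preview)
Your argument is correct. The $\varepsilon/2$ splitting via a smooth approximation $N_\varepsilon\in C_c^\infty((0,\tau_*))$, together with the uniform domination $|e^{-sx}|\le e^{\beta\tau_*}$ on $[0,\tau_*]\times[-\beta,\beta]$ and a single integration by parts, gives a clean uniform-in-$x$ Riemann--Lebesgue estimate with no gaps.

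The paper, however, proceeds differently: it argues by sequences and compactness. Given $|y_n|\to\infty$, it picks $x_n$ realising the supremum, passes to a subsequence with $x_n\to x_0\in[-\beta,\beta]$, and then combines dominated convergence (to replace $x_n$ by $x_0$) with the ordinary Riemann--Lebesgue lemma at the fixed point $x_0$. Your approach is more constructive --- it yields an explicit decay rate $C(\beta,\varepsilon)/|y|$ for the smooth part and avoids any subsequence extraction --- whereas the paper's approach is slightly softer, trading the density argument for a compactness one and invoking Riemann--Lebesgue only in its black-box form. Both are standard; yours is arguably the more quantitative and self-contained of the two.
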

\begin{proof}

For each \(y \in \R\), define
\[
R_y(x) := |R(x+iy)|, \qquad x \in [-\beta, \beta].
\]
Let \((y_n)_n\) be a sequence such that \(|y_n| \to \infty\).
For every \(n \in \N\), the function \(R_{y_n}\) is continuous on the compact interval \([-\beta,\beta]\);
hence there exists \(x_n \in [-\beta,\beta]\) such that
\[
\sup_{x\in[-\beta,\beta]} R_{y_n}(x) = R_{y_n}(x_n).
\]
Up to the extraction of a subsequence, we may assume \(x_n \to x_0 \in [-\beta,\beta]\).

Since \(N \in L^1([0,\tau_*];\R^{m\times m})\) and
\(|e^{x_n s} - e^{x_0 s}| \le 2e^{\beta\tau_*}\) for all \(s \in [0,\tau_*]\),
the dominated convergence theorem implies
\[
|R(x_n+iy_n) - R(x_0+iy_n)|
\le \int_0^{\tau_*} |N(s)|\,|e^{x_n s} - e^{x_0 s}|\,ds
\longrightarrow 0 \quad \text{as } n \to \infty.
\]
Moreover, by the Riemann--Lebesgue lemma, we have
\[
|R(x_0+iy_n)|
= \Big|\int_0^{\tau_*} N(s)e^{x_0 s} e^{i y_n s}\,ds\Big|
\longrightarrow 0.
\]
Combining the two limits yields \(R(x_n + i y_n) \to 0\).
Therefore,
\[
\sup_{x\in[-\beta,\beta]} |R(x+i y_n)| = R_{y_n}(x_n) \longrightarrow 0.
\]
Since this holds for any sequence \((y_n)\) with \(|y_n| \to \infty\), we conclude that
\[
\displaystyle
\lim_{|y|\to\infty} \sup_{x\in[-\beta,\beta]} |R(x+iy)| = 0.
\]
\end{proof}
We now give below the proof of Lemma~\ref{lem:lewin_perturbed}.
\begin{proof}[Proof of Lemma~\ref{lem:lewin_perturbed}] 
    The proof is an adaption of~\cite[Lemma 1, Section VI.2 page 268]{Levin1972}.
    Suppose by contradiction
    that there exists $\beta >0$ and $0<\delta<\beta$ such that there exists $(z_j)_{j\in \N}$ such that for all $j\in \N$, $z_j = x_j +iy_j \in \Omega_{\delta}$ with $|y_j|\xrightarrow[j\to\infty]{}+\infty$, and $\operatorname{dist}(z_j, Z)>\delta$, where $Z:= \{z\in \C: \det \Delta(z) = 0\}$,
    and \begin{equation} \label{lim:delta_0}|\det \Delta(z_j)| \xrightarrow[j\to\infty]{} 0.
\end{equation}
Since $|x_j|< \beta-\delta$ for all $j\in \N$, by Bolzano-Weirstrass theorem, we can extract a subsquence (still denoted by $x_j$) such that $x_j\xrightarrow[j\to\infty]{} x_0\in (-\beta, \beta)$.
Moreover, up to a diagonal extraction of subsequences, we can assume that for all $k\in \N^*$, there exists $\omega_k$ such that $e^{-i\tau_k y_j}\underset{j\to +\infty}{\longrightarrow}e^{i\omega_k}.$
This is a consequence of the compactness of the range of the periodic function $y\mapsto e^{iy}$ defined for all $y\in\R$.
Define \fonction{\Delta_0^\omega}{[-\beta,\beta]}{ \C^{n\times n}}{x}{I + \sum_{k=1}^\infty A_k e^{-\tau_k x}e^{i\omega_k}.}
Hence, for all $x\in [-\beta, \beta]$,
$$|\Delta_0(x+iy_j) - \Delta_0^\omega(x)| \xrightarrow[j\to\infty]{} 0.$$
For all $j\in \N$, by continuity on a compact set, we can denote $x_j^m \in [-\beta,\beta]$ such that:
$$\sup_{x\in [-\beta, \beta]}|\Delta_0(x+iy_j) - \Delta_0^\omega(x)| = |\Delta_0(x_j^m+iy_j) - \Delta_0^\omega(x_j)|.$$ 
From there we have, 
$$|\Delta_0(x_j^m+iy_j) - \Delta_0^\omega(x_j)| = |\sum_{k=0}^{\infty} A_ke^{-\tau_k x_j^m}(e^{-i\tau_ky_j}-e^{i\omega_k})|\xrightarrow[j\to\infty]{} 0, $$
and by dominated convergence,
$$|\sum_{k=0}^{\infty} A_ke^{-\tau_k x_j^m}(e^{-i\tau_ky_j}-e^{i\omega_k})|\leq 2e^{\tau_*\beta}\sum_{k=0}^ \infty |A_k|<+\infty.$$
Therefore, $$\sup_{x\in [-\beta, \beta]}|\Delta_0(x+iy_j) - \Delta_0^\omega(x)| \xrightarrow[j\to\infty]{} 0.$$
Then, using Proposition~\ref{prop:rm},
$$\sup_{x\in [-\beta, \beta]}|\Delta(x+iy_j) - \Delta_0^\omega(x)| \xrightarrow[j\to\infty]{} 0.$$
Furthermore, using the continuity of the determinant, we obtain,
\begin{equation}\sup_{x\in [-\beta, \beta]}|\det\Delta(x+iy_j) - \det \Delta_0^\omega(x)| \xrightarrow[j\to\infty]{} 0.\label{lim:absurd_delta}\end{equation}
Equations~\eqref{lim:absurd_delta} and~\eqref{lim:delta_0} together imply that 
\[
\det \Delta_0^\omega(x_{0})  = 0.
\]

We want to apply Hurwitz theorem~\cite[§5.3.4]{Krantz1999Handbook}.
Observe that $\Delta_0^\omega$ cannot vanish identically on $[-\beta,\beta]$.  Indeed, if $\Delta_0^\omega\equiv 0$ on this interval then, by the identity theorem for holomorphic functions~\cite[Theorem 2, Section 1.3]{NarasimhanNievergelt2001Complex}, the natural extension of $\det\Delta_0^\omega$ (denoted the same) on $\C$ would vanish identically.  This is impossible, since
\[
\lim_{\Re(z) \to +\infty} \det\Delta_0^\omega(z)=\lim_{\Re(z) \to +\infty}\det\left(I+\sum_{k=1}^\infty A_ke^{iw_k}e^{-\tau_k z}\right)=\det I=1.
\]

From Hurwitz theorem on the roots
of uniformly convergent sequences of holomorphic functions (see e.g.~\cite[§5.3.4]{Krantz1999Handbook}), as $\det \Delta_0^\omega(y_0)$ is not identically equal to zero, there exists $J>0$ such that for all $j>J$, $\det \Delta(\cdot +iy_j)$ has a zero in a $\delta-$neighborhood of $z_0:=x_0+iy_0$. Hence, $\det \Delta$ has infinitely many zeros at distance smaller than $\delta$ from a $z_j$. This is a contradiction because the $z_j$ are at distance $\delta$ from every zeros of $\det \Delta$.
\end{proof}
\bibliographystyle{abbrv}
\bibliography{references}
\end{document}